\theoremstyle{plain}
\newtheorem{proposition}{Proposition}
\newtheorem{lemma}{Lemma}
\theoremstyle{definition}
\newtheorem{example}{Example}
\newtheorem{remark}{Remark}
\newtheorem{question}{Question}
\begin{document}

\title[singular curves]
{Singular curves over a finite field and with many points}
\author{E. Ballico}
\address{Dept. of Mathematics\\
 University of Trento\\
38123 Povo (TN), Italy}
\email{ballico@science.unitn.it}
\thanks{The author was partially supported by MIUR and GNSAGA of INdAM (Italy).}
\subjclass[2010]{14G05; 14G10; 14G15; 14H99}
\keywords{singular curve; finite field; rational point}

\begin{abstract}
Recently Fukasawa, Homma and Kim introduced and studied certain projective singular curves
over $\mathbb {F}_q$ with many extremal properties. Here we extend their definition to more general non-rational curves.
\end{abstract}

\maketitle

\section{Introduction}\label{S1}
Fix a prime $p$ and a $p$-power $q$. Recently S. Fukasawa, M. Homma and S. J. Kim introduced a family of singular rational curves defined over $\mathbb {F}_q$, with many singular points
over $\mathbb {F}_q$ and, conjecturally, some extremal properties. In this paper we discuss a similar type of curves, discuss their extremal properties and, in some cases, show that they are, more or less,
the curves introduced in \cite{fhk}. The zeta-function $Z_Y(t)$ of a singular curve $Y$ is explicitly given in terms of the Frobenius on a ``~topological~'' invariant $H^1_c(Y,\mathbb {Q}_{\ell })$ (\cite{d}, \cite{ap}, p. 2).
Hence $Z_Y(t)$ does not detect the finer invariants of the singular points of $Y$ (it does not distinguish between unibranch points defined over the same extension of $\mathbb {F}_q$; in particular it does not distinguish
between a smooth point and a cusp). Using gluing of points of the normalization with the same residue field we may define a ``~minimal~'' singular curve with prescribed normalization and prescribed
zeta-function.

Let $Y$ be a geometrically integral projective curve defined over $\mathbb {F}_q$. Let $u: C\to Y$
denote the normalization. Since any finite field is perfect, $C$ and $u$ are defined over $\mathbb {F}_q$.  Hence for every integer $n\ge 1$ we have $u(C(\mathbb {F}_{q^n}))\subseteq Y(\mathbb {F}_{q^n})$ and for
each $P\in Y(\mathbb {F}_{q^n})$ the scheme $u^{-1}(P)$ is defined over $\mathbb {F}_{q^n}$. Hence the finite set $u^{-1}(P)_{red}$ is defined over $\mathbb {F}_{q^n}$ (but of course if
$\sharp (u^{-1}(P)_{red})>1$ the points of $u^{-1}(P)_{red}$ may only be defined over a larger extension of $\mathbb {F}_q$). We are interested in properties of the set $Y(\mathbb {F}_q)$ knowing $C$.
A. Weil's study of the zeta-function of smooth projective curves was extended to the case of singular curves (\cite{d}). We will use the very useful and self-contained treatment given by Y. Aubry and M. Perret (\cite{ap}).
There are infinitely many curves $Y'$ defined over $\mathbb {F}_q$, with $C$ as their normalization and with the same zeta-function (see Examples \ref{e1}, \ref{e2} and Lemma \ref{e3}). However, given $Y$, there is
one natural such curve if we prescribe also the sets $u^{-1}(P)_{red}$ as subsets of $C(\overline{\mathbb {F}}_q)$. Let $w_s: Y_s \to Y$ be the seminormalization of $Y$ (\cite{d}, \cite{v}).
We recall that $Y_s$ is an integral projective curve with $C$ as its normalization and that $u = w_s\circ u_s$, where $u_s: C \to Y_s$ is the normalization map. Over an algebraically closed
field the one-dimensional seminormal singularities with embedding dimension $n\ge 1$ are exactly the singularities formally isomorphic to the local ring at the origin of  the union of the coordinate axis in $\mathbb {A}^n$.
Even over a finite field the curve we introduce in this note is defined in the same way, i.e. the curves $C_{[q,n]}$, $n\ge 2$, defined below are obtained in the same way, i.e. the gluing process introduced by C. Traverso (\cite{t}) gives
always a seminormal curve and if the base field is algebraically closed, then all seminormal curve singularities are obtained in this way (over an algebraically closed base field a more general construction is given in \cite{s}, p. 70). We call axial singularities the curve singularities obtained in this way. Hence
by definition we say that $(Y,P)$ is an axial singularity with embedding dimension $n$ if and only if over $\overline{\mathbb {F}}_q$ it is formally isomorphic at $P$ to the germ at $0$ of the union of  of the $n$ axis of $\mathbb {A}^n$. An axial singularity of embedding
dimension $n>2$ is not Gorenstein.  An axial singularity of embedding
dimension $2$ is an ordinary double point except that over a non-algebraically closed base field, say $\mathbb {F}_q$, we need to distinguish if the two branches of $Y$ at $P$ (or the two lines of its tangent cone)
are defined over $\mathbb {F}_q$ or not (in the latter case each of them is defined over $\mathbb {F}_{q^2}$). Similarly, for an axial singularity $(Y,P)$ of embedding dimension $t\ge 2$ defined over $\mathbb {F}_q$, the $t$ lines of the tangent cone $C(P,Y)$
are defined over $\mathbb {F}_{q^t}$ and their union is defined over $\mathbb {F}_q$. In the examples we are interested in, none of these lines will be defined over a field $\mathbb {F}_{q^e}$ with $e<t$. If $P\in Y$ is a singular point, then we may associate a non-negative
integer $p_a(Y,P)$ (usually called the arithmetic genus of the singularity or the drop of genus the singular point $P$) such that $p_a(Y) = p_a(C) + \sum _{P\in \mbox{Sing}(Y)} p_a(Y,P)$. 
When $Y$ is an axial singularity with embedding dimension $n$, then $p_a(Y,P) =n-1$.

Let $C$ be a smooth and geometrically connected projective curve defined over $\mathbb {F}_q$. Let $F_q: C(\overline{\mathbb {F}}_q) \to C(\overline{\mathbb {F}}_q)$
be the action of the Frobenius of order $q$.  For each $P\in C(\overline{\mathbb {F}}_q)$ let $ord(P,q)$ be the cardinality of the orbit of $P$ by the action
of $F_q$. For every integer $t\ge 1$ we have we have $F_{q^t} =(F_q)^t$ and $C(\mathbb {F}_{q^t}) =\{P\in C(\overline{\mathbb {F}}_q): (F_q)^t(P) =P\}$.
Hence $ord(P,q)$ is the minimal integer $t$ such that $P\in C(\mathbb {F}_{q^t})$ and $P\in C(\mathbb {F}_{q^s})$ if and only if $ord(P,q)|s$.

We fix $q$, $C$ and an integer $n\ge 2$. For all integers $i\ge 1$ set $N_i:= \sharp (C(\mathbb {F}_{q^i}))$. Let $N'_i$ be the number
of all $P\in C(\overline{\mathbb {F}}_q)$ with $ord (P,q) =i$. Since $N_t =\sum _{s|t} N'_s$, M\"{o}bius inversion formula gives $N'_t = \sum _{s|t} \mu (s)N_{t/s}$ for all $t$.

We construct a singular curve $C_{[q,n]}$ with $C$ as its normalization and $\sharp (C_{[q,n]}(\mathbb {F}_q))$ very large in the following way.  Fix an integer $t$ such that $2\le t\le n$.
For each $P\in C(\mathbb {F}_{q^t})$ with $ord (P,q)=t$ the orbit of the Frobenius $F_q$ has order $t$, say $\{P,\dots ,F_q^{t-1}(P)\}$. Let $C_{[q,n]}$ be the only curve obtained by gluing
each of these orbits (for all possible $t\le n$) (see Remark \ref{t1}). By construction $C_{[q,n]}$ is a seminormal curve defined over $\mathbb {F}_q$, each singular points of $C_{[q,n]}$ is defined over
$\mathbb {F}_q$ and $\sharp (C_{[q,n]}(\mathbb {F}_q)) = N_1 + \sum _{i=2}^{n} {N'_i/i}$. The integers $N'_i$, $i\ge 1$, are uniquely determined
by the integers $N_i$, $i\ge 1$, because $N_t = \sum _{s|t} N'_t$and hence $\sum _{s|t} \mu (s)N_{t/s}$. Fix $P\in C_{[q,n]}$ with embedding dimension $t\ge 2$. The
Frobenius $F_q$ acts on the local ring $\mathcal {O}_{C_{[q,n]},P}$ and hence on the
$t$ branches of $C_{[q,n]}$ at $P$ (i.e. the $t$ smooth branches through $0$ of the tangent cone of $C_{[q,n]}$ at $P$). Since $P$ is an axial singularity, the action of Frobenius is the restriction to $u^{-1}(P)$  of the action of the Frobenius $F_q: C(\overline{\mathbb {F}}_q) \to C(\overline{\mathbb {F}}_q)$.
Hence this action is cyclic, i.e. it has a unique orbit. Thus if $O = u(P)$ with $ord (P) =t$, then $p_a(C_{[q,n]})=t-1$ and
none of the $t$ branches of $C_{[q,n]}$ at $u(O)$ is defined over a proper subfield of $\mathbb {F}_{q^t}$. See Propositions \ref{b1}, \ref{b2}, Question \ref{b3} and Remark \ref{b4} for the relations between $\mathbb {P}^1_{[q,n]}$ and the curves $B$ and $B_n$ studied in \cite{fhk}.

\section{The curves $C_{[q,n]}$ and their maximality properties}\label{S2}
Let $u: C \to Y$ denote the normalization map. We often write $u^{-1}(P)$ instead of $u^{-1}(P)_{red}$.
Set $\Delta _Y:=
\sharp (u^{-1}(\mbox{Sing}(Y) (\overline{\mathbb {F}}_q))) - \sharp (\mbox{Sing}(Y)(\overline{\mathbb {F}}_q))$. The
zeta-function
$Z_Y(t)$ of $Y$ is the product of the zeta-function $Z_C(t)$ of $C$ and a degree $\Delta _Y$ polynomials whose inverse roots
are roots of unity (\cite{d}, \cite{ap}, Theorem 2.1 and Corollary 2.4). Let $\omega _i$, $1 \le i \le 2g$, be the inverse
roots of numerator of $Z_C(t)$ and $\beta _j$, $1\le i \le \Delta _Y$ the inverse roots
of the polynomial $Z_Y(t)/Z_C(t)$. For every integer $n \ge 1$ we have
\begin{equation}\label{eqb1}
\sharp (Y(\mathbb {F}_{q^n})) = q^n+1 - \sum _{i=1}^{2g}\omega _i^n
- \sum _{j=1}^{\Delta _Y} \beta _j^n
\end{equation}
We have $\sharp (C(\mathbb {F}_{q^n})) = q^n+1 - \sum _{i=1}^{2g}\omega _i^n$. Recall that $\vert \beta _j\vert = 1$ for all
$j$. Assume for the moment that $n$ is odd. In this case among all curves with fixed normalization $C$ and with fixed $\Delta _Y$ the integer
$\sharp (Y(\mathbb {F}_q))$ is maximal (resp. minimal) for a curve with
$\beta _j=-1$ for all $j$ (resp. $\beta _j=1$) for all $j$), if any such curve exists. In $n$ is even them the minimum is achieved if there is $Y$ with $\beta _j\in \{-1,1\}$ for all $j$.

\begin{lemma}\label{e0}
Let $Y$ be a geometrically integral projective curve and $u: C\to Y$ its normalization. The degree $\Delta _Y$ polynomial $Z_Y(t)/Z_C(t)$ has all its inverse roots equal to $-1$ if and only
if for each $P\in \mbox{Sing}(Y)$ either $\sharp (u^{-1}(P)) =1$ or $P\in Y(\mathbb {F}_q)$ and $u^{-1}(P)$ is formed by two points of $C(\mathbb {F}_{q^2})$ (in the latter case
these two points are exchanged by the Frobenius and they are in $C(\mathbb {F}_{q^2})\setminus C(\mathbb {F}_q)$).
\end{lemma}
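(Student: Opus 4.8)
The plan is to reduce the statement to an identity of generating functions with one factor per singular point, and then to an elementary polynomial computation.

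First I would record the point–count identity
\[ \sharp(Y(\mathbb{F}_{q^n})) - \sharp(C(\mathbb{F}_{q^n})) = \sharp(\mathrm{Sing}(Y)(\mathbb{F}_{q^n})) - \sharp(u^{-1}(\mathrm{Sing}(Y))_{red}(\mathbb{F}_{q^n})), \]
valid for all $n\ge 1$; it follows at once from the fact that $u$ restricts to an isomorphism $C\setminus u^{-1}(\mathrm{Sing}(Y))\to Y\setminus\mathrm{Sing}(Y)$ together with additivity of $\sharp(-)(\mathbb{F}_{q^n})$ over an open set and its closed complement (it is also contained in \cite{ap}). Subtracting \eqref{eqb1} for $C$ from \eqref{eqb1} for $Y$ gives $\sum_{j=1}^{\Delta_Y}\beta_j^n=\sharp(u^{-1}(\mathrm{Sing}(Y))_{red}(\mathbb{F}_{q^n}))-\sharp(\mathrm{Sing}(Y)(\mathbb{F}_{q^n}))$ for every $n$. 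Exponentiating in the usual way (a reduced finite $\mathbb{F}_q$-scheme which is a disjoint union of closed points of degrees $d_1,d_2,\dots$ has ``zeta function'' $\prod_k(1-t^{d_k})^{-1}$) turns this into the closed form
\[ \frac{Z_Y(t)}{Z_C(t)} \;=\; \prod_{j=1}^{\Delta_Y}(1-\beta_j t) \;=\; \frac{\prod_{P'}(1-t^{\deg P'})}{\prod_P(1-t^{\deg P})}, \]
where $P$ runs over the closed points of $\mathrm{Sing}(Y)$ and $P'$ over the closed points of $C$ lying over them. This is Aubry--Perret's explicit formula (\cite{ap}, Theorem 2.1 and Corollary 2.4), which I would cite, the computation above serving only as a reminder.

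Next I would group the factors fibre by fibre. For a closed point $P$ of $\mathrm{Sing}(Y)$ of degree $d$, let $P'_1,\dots,P'_r$ be the closed points of $C$ over $P$, of degrees $e_1,\dots,e_r$; since $\kappa(P)\subseteq\kappa(P'_s)$ we have $d\mid e_s$ for all $s$. Hence $Z_Y(t)/Z_C(t)=\prod_P h_P(t)$ with $h_P(t)=(\prod_{s=1}^r(1-t^{e_s}))/(1-t^d)\in\mathbb{Z}[t]$, $h_P(0)=1$, $\deg h_P=\sum_s e_s-d$, and $\sum_P\deg h_P=\Delta_Y$. Since $\mathbb{Q}[t]$ is a unique factorization domain in which $1+t$ is prime and $h_P(0)=1$, the assertion that all $\beta_j$ equal $-1$ (equivalently $Z_Y(t)/Z_C(t)=(1+t)^{\Delta_Y}$) is equivalent to $h_P(t)=(1+t)^{\deg h_P}$ for every $P\in\mathrm{Sing}(Y)$. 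The core is then to decide when $(\prod_{s=1}^r(1-t^{e_s}))/(1-t^d)=(1+t)^{\sum_s e_s-d}$: the left-hand side vanishes to order $r-1$ at $t=1$ (numerator to order $r$, denominator to order $1$), while $(1+t)^m$ does not vanish at $t=1$, so $r=1$; then the left-hand side is $(1-t^{e_1})/(1-t^d)=1+t^d+t^{2d}+\cdots+t^{e_1-d}$, and comparing the coefficient of $t$ (or evaluating at $t=1$: $e_1/d$ against $2^{e_1-d}$) forces either $e_1=d$ or else $d=1$ and $e_1=2$.

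Finally I would translate these two cases back. If $r=1$ and $e_1=d$ then there is exactly one point of $C$ over $P$ and it has residue field $\kappa(P)$, i.e.\ $u^{-1}(P)$ is a single point (equivalently $P$ is unibranch), and $h_P=1$; if $d=1$ and $e_1=2$ then $P\in Y(\mathbb{F}_q)$ and the unique point of $C$ over $P$ has degree $2$, i.e.\ $u^{-1}(P)$ consists of two points of $C(\mathbb{F}_{q^2})$ interchanged by Frobenius — necessarily in $C(\mathbb{F}_{q^2})\setminus C(\mathbb{F}_q)$, since a point of $C(\mathbb{F}_q)$ would be its own Frobenius orbit — and $h_P=1+t$. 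Conversely, if every singular point of $Y$ is of one of these two types, then every $h_P$ equals $1$ or $1+t$, so $Z_Y(t)/Z_C(t)=(1+t)^{\Delta_Y}$ and all $\beta_j=-1$, which closes the equivalence. I expect the only delicate points to be the bookkeeping in the first step (pinning down the exact shape of $Z_Y(t)/Z_C(t)$ and checking that the degrees of the local factors add up to $\Delta_Y$) and keeping the three kinds of objects — closed points of $Y$, closed points of $C$, and geometric branches — carefully separated; the polynomial computation itself is routine.
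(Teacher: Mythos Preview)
Your argument is correct and follows the same overall strategy as the paper's: both invoke the Aubry--Perret factorisation of $Z_Y(t)/Z_C(t)$ into local pieces indexed by the singular points and then determine when each local piece is a power of $1+t$. The differences are in bookkeeping and in the final polynomial step. You work consistently with \emph{closed} points of $\mathrm{Sing}(Y)$ and of $C$, writing $h_P(t)=\bigl(\prod_s(1-t^{e_s})\bigr)/(1-t^d)$; the paper instead indexes by geometric points $Q\in u^{-1}(P)$ and their degrees $d_Q$, which makes the local formula slightly delicate to read (in the good case $d_P=1$ with two conjugate branches of degree $2$, those two branches constitute a \emph{single} closed point of degree $2$, and one must interpret the product accordingly to obtain $1+t$). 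Your closed-point formulation avoids that ambiguity. For the polynomial calculation, your use of the vanishing order at $t=1$ to force $r=1$, followed by one coefficient comparison (or the evaluation $e_1/d=2^{\,e_1-d}$), is more economical than the paper's case split on $d_P\ge 2$ versus $d_P=1$ and on whether some $d_Q\ge 3$; both routes, of course, land on the same dichotomy $e_1=d$ or $(d,e_1)=(1,2)$.
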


\begin{proof}
The explicit form of the polynomial $Z_Y(t)/Z_C(t)$ is given in \cite{ap}, Theorem 2.1. The polynomial $Z_Y(t)/Z_C(t)$ is a product of polynomials, each of them associated
to a different singular point of $Y$. Hence it is sufficient to consider separately the contribution of each singular point of $Y$. Fix $P\in \mbox{Sing}(Y)$
and call $Z_P(t)$ the associated polynomial. Let $d_P$ be the minimal integer $t\ge 1$ such that $P\in Y(\mathbb {F}_{q^t})$. We have $(1-t^{d_P})Z_P(t)
= \prod _{Q\in u^{-1}(P)}(1-t^{ord(Q,q)})$. Since $Y$ is defined over $\mathbb {F}_q$, the orbit of
$P$ by the Frobenius of $Y$ has order $d_P$. For any point $P' \ne P$ in this orbit, say $F_q^x(P)$ for some $x\in \{1,\dots ,d_P-1\}$ we have
$u^{-1}(P') = F_q^x(u^{-1}(P))$ and $d_{P'} =d_P$. Since the normalization map is defined over $\mathbb {F}_q$, we have $d_P|ord (Q,q)$ for each $Q\in u^{-1}(P)$. 

First assume $\sharp (u^{-1}(P))=1$.
The only point, $Q$, of $u^{-1}(P)$ is defined over $\mathbb {F}_{q^{d_P}}$. Since $d_{u(Q)}| ord (Q,q)$, we get $ord (Q,q)=d_P$. We easily get
that $\sharp (u^{-1}(P))=1$ if and only if the constant $1$ is the factor of $Z_Y(t)/Z_C(t)$ associated to the orbit of $P$.
Hence from now on we assume $\alpha :=\sharp (u^{-1}(P)) \ge 2$. 

If $ord (Q,q)>d_P$ for some $Q\in u^{-1}(P)$ and either $d_P \ge 2$ or $ord (Q,q)\ge 3$, then we
get that $(1-t^{d_P})Z_P(t)$ has a root of order $>\max \{d_P,2\}$ and hence $Z_P(t)$ has a root $\ne -1$.

Now assume $d_P\ge 2$ and $ord (Q,q) = d_P$ for all $Q\in u^{-1}(Q)$. We get $Z_P(t) = (1-t^{d_P})^\alpha$. Since
we assumed $\alpha \ge 2$, even in this case $Z_P(t)$ has a root $\ne -1$. 

Now assume $d_P=1$. It remains
to analyze the case $ord (Q,q)\in \{1,2\}$ for any $Q\in u^{-1}(P)$. If $ord (Q,q)=1$ for at least one $Q\in u^{-1}(P)$, then $Z_P(1)=0$. If $ord (Q,q)=2$ for all $Q\in u^{-1}(P)$, then
$Z_P = (1+t)(1-t^2)^{\alpha -1}$ has $\alpha -1$ roots equal to $1$.\end{proof}

\begin{remark}\label{b2.0}
Fix $q$, $g$, $C$ with genus $g$ and an integer $n\ge 2$. Set $N_i:= \sharp (C(\mathbb {F}_{q^i})$. We have $\sharp (C_{[q,n]}(\mathbb {F}_q))
= N_1+\sum _{i=2}^{n} N'_i/i$. 
Now assume that $g>0$ and that $q$ is a square. If $C$ is a minimal curve for $\mathbb {F}_q$, then it is a minimal curve for each
$\mathbb {F}_{q^i}$, $i\ge 2$ (use that $C$ is minimal if and only if $Z_C(t) = \frac{(t-q)^{2g}}{(1-t)(1-qt)}$ (\cite{vr})). Hence for fixed $q$ and $n$ with $n$ an odd prime
the integer $\sharp (C_{[q,n]}(\mathbb {F}_q)) $
is minimal varying $C$  among all smooth curves of genus $g$ if and only if $C$ is a minimal curve. If $n=2$ and $q$ is a square, then $Z_Y(t)$ is the same for
all minimal curves over the same genus. Since $N'_2 = N_2-N_1$, we get $N_1+N'_2/2 = N_2+N_1/2$ and hence when $n=2$ and $q$ is a square for
a fixed genus $g$ the minimal among all $\sharp (C_{[q,2]}(\mathbb {F}_q)) $ with fixed genus $g$ is obtained if and only if $C$ is a minimal curve.
\end{remark}

\begin{proposition}\label{b1}
The curve $\mathbb {P}^1_{[q,2]}$ is isomorphic over $\mathbb {F}_q$ to the plane curve $B \subset \mathbb {P}^2$ defined in \cite{f} and \cite{fhk}.
\end{proposition}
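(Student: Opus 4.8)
The plan is to exhibit an explicit $\mathbb{F}_q$-isomorphism between $\mathbb{P}^1_{[q,2]}$ and the plane curve $B$ by constructing $\mathbb{P}^1_{[q,2]}$ as the image of a morphism $\mathbb{P}^1 \to \mathbb{P}^2$ defined over $\mathbb{F}_q$ whose image is exactly $B$. First I would recall the definition of $B$ from \cite{f}, \cite{fhk}: it is the plane curve cut out by a specific polynomial (the union, up to the natural rescaling, coming from the pencil of lines through the $\mathbb{F}_q$-rational points of $\mathbb{P}^2$, or more precisely the curve whose singular points are the $q^2+q+1$ Galois orbits coming from $\mathbb{P}^1(\mathbb{F}_{q^2})\setminus\mathbb{P}^1(\mathbb{F}_q)$ glued in pairs), and note that by construction $B$ has $\mathbb{P}^1$ as normalization via a map $v: \mathbb{P}^1 \to B \subset \mathbb{P}^2$. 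The key point is that $v$ is an $\mathbb{F}_q$-morphism which is injective on $\mathbb{P}^1(\mathbb{F}_q)$ and glues exactly the $\frac{q^2-q}{2}$ Frobenius orbits of size $2$ coming from points of $\mathbb{P}^1(\mathbb{F}_{q^2})\setminus\mathbb{P}^1(\mathbb{F}_q)$ into ordinary double points defined over $\mathbb{F}_q$, with the two branches at each node conjugate over $\mathbb{F}_{q^2}$ and not individually defined over $\mathbb{F}_q$.

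The main steps would then be: (1) Identify the gluing data of $B$. Every singular point of $B$ is an ordinary double point (a seminormal axial singularity of embedding dimension $2$) whose preimage under normalization is a pair $\{P, F_q(P)\}$ with $P \in \mathbb{P}^1(\mathbb{F}_{q^2})\setminus\mathbb{P}^1(\mathbb{F}_q)$; conversely every such Frobenius orbit is glued. This is precisely the gluing data defining $\mathbb{P}^1_{[q,2]}$ in Section \ref{S1} with $t=n=2$. (2) Invoke the uniqueness part of Traverso's construction, cited in Remark \ref{t1}, which says that the seminormal curve with prescribed normalization and prescribed (Frobenius-stable) gluing of geometric points with equal residue fields is unique up to canonical $\mathbb{F}_q$-isomorphism. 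Since $B$ is seminormal (all its singularities are ordinary double points, hence seminormal) with normalization $\mathbb{P}^1$ and with exactly the gluing data defining $\mathbb{P}^1_{[q,2]}$, we conclude $B \cong \mathbb{P}^1_{[q,2]}$ over $\mathbb{F}_q$. (3) As a consistency check, compare invariants: by Proposition \ref{b2.1}, $p_a(\mathbb{P}^1_{[q,2]}) = (q^2-q)/2$ and $\sharp(\mathbb{P}^1_{[q,2]}(\mathbb{F}_q)) = q+1+(q^2-q)/2$, which must agree with the known arithmetic genus and point count of $B$ from \cite{fhk}; this both confirms the identification and rules out a spurious automorphism issue.

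I expect the main obstacle to be step (1): one must verify carefully that $B$ is genuinely seminormal, i.e. that all of its singularities are ordinary nodes with both branches smooth, rather than tacnodes, higher-order cusps, or points where more than two branches meet. This requires either quoting the explicit local analysis of $B$ in \cite{f}, \cite{fhk} or re-deriving it from the defining equation of $B$ and a tangent-cone computation at each singular point. A secondary subtlety is checking the \emph{field of definition} of the two branches at each node of $B$: the statement requires them to be conjugate over $\mathbb{F}_{q^2}$ and not individually $\mathbb{F}_q$-rational, which is exactly the distinction flagged in Section \ref{S1} for axial singularities of embedding dimension $2$ over a non-closed field, and which by Lemma \ref{e0} is also forced by the requirement that $Z_B(t)/Z_{\mathbb{P}^1}(t)$ have all inverse roots equal to $-1$. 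Once these local facts about $B$ are in hand, the identification with $\mathbb{P}^1_{[q,2]}$ is immediate from the uniqueness of the seminormal gluing.
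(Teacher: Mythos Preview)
Your approach is correct and shares the same skeleton as the paper's proof: match the fibers of the normalization map $\Phi:\mathbb{P}^1\to B$ with those of $u:\mathbb{P}^1\to\mathbb{P}^1_{[q,2]}$ (citing \cite{fhk}, Theorem~2.2), and then use uniqueness of the seminormal curve with prescribed gluing. The difference lies in how the seminormality of $B$ is handled, and here the paper reorganizes the argument so as to eliminate precisely the obstacle you flagged in step~(1).

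Rather than first verifying via local tangent-cone analysis that every singularity of $B$ is an ordinary node, the paper uses the universal property of the seminormalization to produce a bijective birational morphism $\psi:\mathbb{P}^1_{[q,2]}\to B$ \emph{without} assuming that $B$ is seminormal: it suffices that $\Phi$ is unramified (\cite{fhk}, Theorem~2.2(i)) and that its fibers coincide with those of $u$ (\cite{fhk}, Theorem~2.2(iii)). Your step~(3), which you treat as a mere consistency check, is then promoted to the decisive step: since $B$ is a plane curve of degree $q+1$, the degree--genus formula gives $p_a(B)=\binom{q}{2}=(q^2-q)/2=p_a(\mathbb{P}^1_{[q,2]})$, and a bijective birational morphism between integral projective curves of equal arithmetic genus is an isomorphism. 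The seminormality of $B$ thus becomes a \emph{consequence} rather than a hypothesis. Your route would work but requires either a genuine local computation or a sharper citation to \cite{fhk}; the paper's reordering trades that for a one-line appeal to the plane-curve genus formula.
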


\begin{proof}
Let $u: \mathbb {P}^1 \to \mathbb {P}^1_{[q,2]}$ denote the normalization map. The normalization map $\Phi: \mathbb {P}^1\to B$ is unramified, because
the composition of it with the inclusion $B\hookrightarrow \mathbb {P}^2$ is unramified (part (i) of \cite{fhk}, Theorem 2.2). By \cite{fhk}, part (iii) of Theorem 2.2, $B$ is a degree $q+1$ plane curve with $(q^2-q)/2$ singular points
and $\Phi (P) = \Phi (Q)$ with $P\ne Q$ if and only if $u(P)=u(Q)$. Hence the universal property of the seminormalization gives the existence of a morphism $\psi : \mathbb {P}^1_{[q,2]}
\to B$ such that $\psi$ is a bijection. Since $p_a( \mathbb {P}^1_{[q,2]}) = (q^2-q)/2$, we have $p_a( \mathbb {P}^1_{[q,2]}) = p_a(B)$. Hence $\psi$ is an isomorphism.
\end{proof}

\begin{proposition}\label{b2}
Fix an integer $n\ge 3$. Then $\mathbb {P}^1_{[q,n]}$ is the seminormalization of the curve $B_n\subset \mathbb {P}^n$, defined in \cite{fhk}, \S 6, and
there is a birational morphism $\psi _{q,n}: \mathbb {P}^1_{[q,n]} \to B_n$ defined over $\mathbb {F}_q$ such that $\psi _{n,q}: \mathbb {P}^1_{[q,n]}(K) \to B_n(K)$
is bijective for every field $K\supseteq \mathbb {F}_q$.
\end{proposition}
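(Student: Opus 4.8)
The plan is to follow the proof of Proposition \ref{b1} as closely as possible, replacing the input from \cite{fhk}, Theorem 2.2 by the corresponding statements established for $B_n$ in \cite{fhk}, \S 6. Write $\Phi _n\colon \mathbb{P}^1\to B_n$ for the normalization map of $B_n$ and $u\colon \mathbb{P}^1\to \mathbb{P}^1_{[q,n]}$ for the normalization map of $\mathbb{P}^1_{[q,n]}$; both are defined over $\mathbb{F}_q$. From \cite{fhk}, \S 6 I would take the following: $\Phi _n$ is the normalization of $B_n$ and is unramified, and for $P\ne Q$ in $\mathbb{P}^1(\overline{\mathbb{F}}_q)$ one has $\Phi _n(P)=\Phi _n(Q)$ if and only if $P$ and $Q$ lie in a common $F_q$-orbit contained in $\mathbb{P}^1(\mathbb{F}_{q^t})\setminus \mathbb{P}^1(\mathbb{F}_{q^{t-1}})$ for some $t$ with $2\le t\le n$. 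By the definition of $\mathbb{P}^1_{[q,n]}$ these orbits are exactly the fibres of $u$ over the singular points of $\mathbb{P}^1_{[q,n]}$; hence $\Phi _n$ and $u$ have the same fibres, and in particular $B_n$ and $\mathbb{P}^1_{[q,n]}$ have the same singular points and the same number of branches at each of them.

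Next I would construct $\psi _{q,n}$. Fix a singular point $z$ of $B_n$; by the fibre description its preimage under $\Phi _n$ is a single $F_q$-orbit of some length $t$, $2\le t\le n$, i.e. a closed point $\mathfrak{Q}$ of $\mathbb{P}^1$ with residue field $\mathbb{F}_{q^t}$, and $z\in B_n(\mathbb{F}_q)$ because the Frobenius permutes that orbit and so fixes $z$. Since $\Phi _n$ is the normalization, $\mathcal{O}_{B_n,z}$ is a subring of $\mathcal{O}_{\mathbb{P}^1,\mathfrak{Q}}$, and the composite $\mathcal{O}_{B_n,z}\hookrightarrow \mathcal{O}_{\mathbb{P}^1,\mathfrak{Q}}\to \mathcal{O}_{\mathbb{P}^1,\mathfrak{Q}}/\mathfrak{m}_{\mathfrak{Q}}=\mathbb{F}_{q^t}$ factors through $\kappa (z)=\mathbb{F}_q$; hence $\mathcal{O}_{B_n,z}$ is contained in the subring $R_z:=\{\,f\in \mathcal{O}_{\mathbb{P}^1,\mathfrak{Q}}:\ f\bmod \mathfrak{m}_{\mathfrak{Q}}\in \mathbb{F}_q\,\}$, which is exactly the local ring $\mathcal{O}_{\mathbb{P}^1_{[q,n]},z}$ by the construction of $\mathbb{P}^1_{[q,n]}$ (the axial gluing over $\mathbb{F}_q$ of the orbit $\mathfrak{Q}$, with $p_a=\dim _{\mathbb{F}_q}(\mathbb{F}_{q^t}/\mathbb{F}_q)=t-1$). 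At all other points $\mathbb{P}^1$, $\mathbb{P}^1_{[q,n]}$ and $B_n$ coincide. Using the identification of the underlying spaces from the first paragraph, these local inclusions of $\mathbb{F}_q$-algebras glue to a finite birational morphism $\psi _{q,n}\colon \mathbb{P}^1_{[q,n]}\to B_n$ over $\mathbb{F}_q$ with $\psi _{q,n}\circ u=\Phi _n$ (equivalently, invoke the universal property of the seminormalization as in Proposition \ref{b1}). Since $\Phi _n$ and $u$ induce the same partition, $\psi _{q,n}$ is a bijection on points, and since the relevant singular points are $\mathbb{F}_q$-rational it induces trivial residue field extensions; thus $\psi _{q,n}$ is subintegral.

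It remains to identify $\psi _{q,n}$ with the seminormalization of $B_n$. Seminormalization commutes with localization, and for a reduced Noetherian ring the seminormalization is the largest subintegral extension inside the normalization (\cite{t}, \cite{v}). At a singular point $z$ of $B_n$ we have $\mathcal{O}_{B_n,z}\subseteq R_z\subseteq \mathcal{O}_{\mathbb{P}^1,\mathfrak{Q}}$ with $R_z$ seminormal and subintegral over $\mathcal{O}_{B_n,z}$; moreover any subintegral extension of $\mathcal{O}_{B_n,z}$ inside $\mathcal{O}_{\mathbb{P}^1,\mathfrak{Q}}$ is local with residue field $\mathbb{F}_q$, hence is contained in $R_z$. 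Therefore $R_z$ is the seminormalization of $\mathcal{O}_{B_n,z}$, and comparing stalks everywhere yields $\mathbb{P}^1_{[q,n]}=(B_n)_s$, with $\psi _{q,n}$ the seminormalization morphism. Finally, a seminormalization morphism is a universal homeomorphism with trivial residue field extensions, so its fibre over any $K$-point of $B_n$ (for $K\supseteq \mathbb{F}_q$ a field) is the spectrum of a finite local $K$-algebra with residue field $K$, i.e. a single $K$-point; hence $\psi _{q,n}\colon \mathbb{P}^1_{[q,n]}(K)\to B_n(K)$ is bijective. The only genuinely external ingredient is the local structure of $B_n$ along its singular locus recorded in \cite{fhk}, \S 6 (the fibre description and the fact that $\Phi _n$ is the normalization); once that is granted the rest is the soft commutative algebra of seminormalization above, and the argument is shorter than the one for $n=2$ precisely because one does not — and in general cannot — promote $\psi _{q,n}$ to an isomorphism by a genus count, since $B_n$ need not be seminormal when $n\ge 3$.
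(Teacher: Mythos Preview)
Your proof is correct and follows essentially the same route as the paper: take the fibre description of $\Phi_n$ from \cite{fhk}, \S 6, match it with the fibres of $u$ by the very definition of $\mathbb{P}^1_{[q,n]}$, and then invoke the universal property of seminormalization to produce $\psi_{q,n}$. The paper's proof is a two-sentence sketch that stops at that point; you go further and unpack the universal property by hand, exhibiting the local ring of $\mathbb{P}^1_{[q,n]}$ at a singular image point as $R_z=\{f\in\mathcal{O}_{\mathbb{P}^1,\mathfrak{Q}}:f\bmod\mathfrak{m}_{\mathfrak{Q}}\in\mathbb{F}_q\}$, checking $\mathcal{O}_{B_n,z}\subseteq R_z$ from the residue-field condition, and then verifying maximality among subintegral extensions. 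This extra work actually fills a small gap in the paper's proof, which asserts that $\mathbb{P}^1_{[q,n]}$ \emph{is} the seminormalization of $B_n$ but only argues the existence of the bijective morphism $\psi_{q,n}$; your maximality step supplies the missing identification. One minor remark: you cite that $\Phi_n$ is unramified among the inputs from \cite{fhk}, but you never use it (nor does the paper here---it is only brought in later in Remark~\ref{b4}); the fibre description alone suffices for everything you do.
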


\begin{proof}
Let $u: \mathbb {P}^1 \to \mathbb {P}^1_{[q,n]}$ and $\Phi _n: \mathbb {P}^1\to B_n$ denote the normalization maps. By \cite{fhk}, part (ii) of Theorem 6.4, each point $P\in \mbox{Sing}(B_n)$
corresponds to an integer $t\in \{2,\dots ,n\}$ and an orbit of the Frobenius on $\mathbb {P}^1(\mathbb {F}_{q^t})\setminus \mathbb {P}^1(\mathbb {F}_{q^{t-1}})$.
Hence the definition of $\mathbb {P}^1_{[q,n]}$ and the universal property of the seminormalization gives a birational morphism $\psi _{q,n}: \mathbb {P}^1_{[q,n]} \to B_n$ defined over $\mathbb {F}_q$
such that $\psi _{n,q}: \mathbb {P}^1_{[q,n]}(K) \to B_n(K)$
is bijective for every field $K\supseteq \mathbb {F}_q$.
\end{proof}

\begin{question}\label{b3}
We guess that $\psi _{q,n}$ is an isomorphism.
\end{question}

\begin{remark}\label{b4}
Fix a prime power $q$ and the integer $n\ge 3$. Let $\Phi _n: \mathbb {P}^1\to B_n$ denote the normalization map. By
\cite{fhk}, part (i) of Theorem 6.4, $\Phi _n$ is unramified (this is a necessary condition for being
$\psi _{q,n}$ an isomorphism). The following conditions are equivalent:
\begin{itemize}
\item[(i)] the morphism $\psi _{q,n}$ is an isomorphism;
\item[(ii)] $p_a(B_n) = p_a(\mathbb {P}^1_{[q,n]})$;
\item[(iii)] for each $P\in \mbox{Sing}(B_n)$, say with $P =\Phi _n(Q)$ and $ord(Q,q) =t$, the singularity $(B_n,P)$
has arithmetic genus $t-1$;
\item[(iv)]  for each $P\in \mbox{Sing}(B_n)$, say with $P =\Phi _n(Q)$ and $ord (Q,q) =t$ the tangent cone $C(P,B_n)\subset \mathbb {P}^n$
is formed by $t$ lines through $P$ spanning a $t$-dimensional linear subspace.
\end{itemize}
Part (iv) is just the definition of seminormal singularity given in \cite{d}.
Since $\Phi _n$ is unramified, $B_n$ has at $P$ $t$ smooth branches.\end{remark}

\begin{proposition}\label{a1}
Let $C$ be a smooth and geometrically irreducible projective curve defined over $\mathbb {F}_q$. Set $2\delta :=  \sharp (C(\mathbb {F}_{q^2}))
-\sharp (C(\mathbb {F}_q))$. Let $Y$ a projective curve defined over $\mathbb {F}_q$ with $C$ as its normalization. We have $\sharp (Y(\mathbb {F}_q))
\ge \sharp (C(\mathbb {F}_q)) +\delta$ and $p_a(Y) \le g+\delta$ if and only if $Y$ is isomorphic to $C_{[q,2]}$ over $\mathbb {F}_q$.
\end{proposition}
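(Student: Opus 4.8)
The plan is to prove the two implications separately, the forward (``only if'') direction being the substantial one.

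\textbf{The ``if'' direction.} Suppose $Y\cong C_{[q,2]}$ over $\mathbb{F}_q$. By Remark \ref{b2.0} (the case $n=2$ of its displayed formula) we have $\sharp(C_{[q,2]}(\mathbb{F}_q)) = \sharp(C(\mathbb{F}_q)) + \bigl(\sharp(C(\mathbb{F}_{q^2}))-\sharp(C(\mathbb{F}_q))\bigr)/2 = \sharp(C(\mathbb{F}_q))+\delta$, and since $C_{[q,2]}$ is obtained from $C$ by gluing each of the $\delta$ length-$2$ Frobenius orbits of $C(\mathbb{F}_{q^2})\setminus C(\mathbb{F}_q)$ into an axial singularity of embedding dimension $2$ (local arithmetic genus $1$), we get $p_a(C_{[q,2]}) = p_a(C)+\delta = g+\delta$. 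Both relations hold with equality, so the two displayed inequalities are satisfied; this also incidentally shows that for $C_{[q,2]}$ the inequalities are in fact equalities.

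\textbf{The ``only if'' direction.} The core is a two-sided sandwich on $\Delta_Y$. First, for every $P\in\mathrm{Sing}(Y)(\overline{\mathbb{F}}_q)$ the local arithmetic genus (the $\delta$-invariant of the singularity) satisfies $p_a(Y,P)\ge \sharp(u^{-1}(P))-1$, with equality precisely when $(Y,P)$ is seminormal, i.e.\ axial; this is the standard lower bound for the $\delta$-invariant in terms of the number of branches, recalled in the Introduction and in \cite{d}. Summing over $\mathrm{Sing}(Y)$ and using $p_a(Y)=p_a(C)+\sum_P p_a(Y,P)=g+\sum_P p_a(Y,P)$ with the hypothesis $p_a(Y)\le g+\delta$ gives $\Delta_Y=\sum_P(\sharp(u^{-1}(P))-1)\le\sum_P p_a(Y,P)\le\delta$. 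Second, taking $n=1$ in \eqref{eqb1} yields $\sharp(Y(\mathbb{F}_q))-\sharp(C(\mathbb{F}_q))=-\sum_{j=1}^{\Delta_Y}\beta_j\le\sum_j|\beta_j|=\Delta_Y$, so the hypothesis $\sharp(Y(\mathbb{F}_q))\ge\sharp(C(\mathbb{F}_q))+\delta$ forces $\delta\le\Delta_Y$. Hence $\Delta_Y=\delta$ and every inequality above is an equality: since $-\sum_j\beta_j=\Delta_Y=\sum_j 1$ and $-\mathrm{Re}\,\beta_j\le|\beta_j|=1$, we get $\beta_j=-1$ for all $j$; and $p_a(Y,P)=\sharp(u^{-1}(P))-1$ for every $P$, so every singularity of $Y$ is axial. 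Now I would feed ``$\beta_j=-1$ for all $j$'' into Lemma \ref{e0}: each $P\in\mathrm{Sing}(Y)$ is either unibranch or lies in $Y(\mathbb{F}_q)$ with $u^{-1}(P)$ a two-point Frobenius orbit inside $C(\mathbb{F}_{q^2})\setminus C(\mathbb{F}_q)$. A unibranch axial point is smooth, so the first alternative is vacuous; thus every singular point of $Y$ is an ordinary double point obtained by gluing a length-$2$ Frobenius orbit of $C(\mathbb{F}_{q^2})\setminus C(\mathbb{F}_q)$, each such point contributing exactly $1$ to $\Delta_Y=\delta$, so there are exactly $\delta$ of them. Since $C(\mathbb{F}_{q^2})\setminus C(\mathbb{F}_q)$ consists of exactly $\delta$ such orbits and distinct singular points of $Y$ have disjoint fibres under $u$, every one of these orbits is glued in $Y$. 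This is precisely the gluing datum defining $C_{[q,2]}$; as $Y$ and $C_{[q,2]}$ are both seminormal projective curves over $\mathbb{F}_q$ with normalization $C$ and with the same set of identified points of $C(\overline{\mathbb{F}}_q)$, the uniqueness of the gluing construction (Remark \ref{t1}) together with the universal property of the seminormalization gives $Y\cong C_{[q,2]}$ over $\mathbb{F}_q$.

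I expect the main obstacle to be twofold: having cleanly available the equality criterion $p_a(Y,P)=\sharp(u^{-1}(P))-1$ $\Longleftrightarrow$ $(Y,P)$ seminormal (so that the genus hypothesis alone rules out cusps and other unibranch singularities, which are invisible to the $\beta_j$); and, in the final step, being careful that matching normalization, matching glued Frobenius orbits, and seminormality really pin down the $\mathbb{F}_q$-isomorphism type with no residual twisting — which is exactly what the gluing construction behind $C_{[q,n]}$ (Remark \ref{t1}) is designed to guarantee. The rest is the elementary sandwich argument combining \eqref{eqb1}, the bound $|\beta_j|=1$, and $\Delta_Y=\sum_P(\sharp(u^{-1}(P))-1)$.
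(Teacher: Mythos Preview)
Your proof is correct and follows essentially the same route as the paper's: sandwich $\Delta_Y$ between $\delta$ (from the point-count hypothesis via $|\beta_j|=1$ in \eqref{eqb1}) and $p_a(Y)-g\le\delta$ (from the local genus bound), force all inequalities to equalities, invoke Lemma~\ref{e0} to pin down the fibres of $u$, and conclude by seminormality and the uniqueness of the gluing. The only cosmetic difference is the order in which seminormality is extracted: you deduce it directly from the equality $p_a(Y,P)=\sharp(u^{-1}(P))-1$ and then note that a unibranch seminormal point is smooth, while the paper first reads off the two-point fibres from Lemma~\ref{e0} and then observes that $p_a(Y)=g+\sharp(\mathrm{Sing}(Y))$ with two branches everywhere forces nodality; the logical content is identical.
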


\begin{proof}
The ``~if~'' part is true, because $p_a(C_{[q,2]}) =g+\delta$ and $\sharp (C_{[q,2]}(\mathbb {F}_q)) = g+\delta$. Assume $\sharp (Y(\mathbb {F}_q))
\ge \sharp (C(\mathbb {F}_q)) +\delta$ and $p_a(Y) \le g+\delta$. Let $u: C \to Y$ be the normalization map. The morphism $u$ is defined over $\mathbb {F}_q$, i.e. over
a field on which $Y$ is defined, because any finite field is perfect. We have
$\sharp (\mbox{Sing}(Y)) \le p_a(Y)-\delta$ and equality holds only if each singular point of $Y$ is formally isomorphic over $\overline{\mathbb {F}}_q$ either to a node or an ordinary
cusp. Set $\Delta _Y:= \sharp (u^{-1}(\mbox{Sing}(Y)(\overline{\mathbb {F}}_q)
-\sharp (\mbox{Sing}(Y)(\overline{\mathbb {F}}_q)$. The polynomial $Z_Y(t)/Z_C(t)$ has degree $\Delta _Y$ and
$\sharp (Y(\mathbb {F}_q)) \le \sharp (C(\mathbb {F}_q)) +\Delta _Y$ and equality holds if and only if each inverse root of $Z_Y(t)/Z_C(t)$ is equal to $-1$.
Since $\Delta _Y \le p_a(Y) -g$, we get $\Delta _Y = \delta$ and $p_a(Y) = g+\delta$. Since $p_a(Y) =g+\Delta _Y$ and $\sharp (\mbox{Sing}(Y)(\mathbb {F}_q)) \ge \delta$,
we get $\mbox{Sing}(Y)(\overline{\mathbb {F}}_q) = \mbox{Sing}(Y)(\mathbb {F}_q)$, $\sharp (\mbox{Sing}(Y)(\mathbb {F}_q))=\delta$ and that for each $P\in \mbox{Sing}(Y)(\mathbb {F}_q)$ the set $u^{-1}(P)$ is formed by two points of $C(\mathbb {F}_{q^2})\setminus C(\mathbb {F}_q)$
exchanged by the Frobenius (Lemma \ref{e0}). Since $p_a(Y)=g+\sharp (\mbox{Sing}(Y)(\overline{\mathbb {F}}_q))$ and $\sharp (u^{-1}(P))\ge 2$
for each $P\in \mbox{Sing}(Y)(\overline{\mathbb {F}}_q)$, we also get that $Y$ is nodal. Hence $Y$ is seminormal. The structure
of the fibers of $u^{-1}(P)$, $P\in \mbox{Sing}(Y)(\overline{\mathbb {F}}_q)$, gives $Y = C_{[q,2]}$.\end{proof}

\begin{proposition}\label{a2}
Let $Y$ be a geometrical integral projective curve defined over $\mathbb {F}_q$ and with only seminormal singularity. Let $u: C \to Y$ be the normalization map. Let $\Delta _Y$ be the degree
of the polynomial $Z_Y(t)/Z_C(t))$. Assume $2\Delta _Y \le  \sharp (C(\mathbb {F}_{q^2}))
-\sharp (C(\mathbb {F}_q))$. We have $\sharp (Y(\mathbb {F}_q)) \le \sharp (C(\mathbb {F}_{[q,2]}))$ and equality holds if and only if $Y$ is isomorphic to $C_{[q,2]}$ over $\mathbb {F}_q$.
\end{proposition}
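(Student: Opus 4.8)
The plan is to argue that the upper bound $\sharp(Y(\mathbb{F}_q))\le \sharp(C_{[q,2]}(\mathbb{F}_q))$ follows from two inequalities chained together: first, $\sharp(Y(\mathbb{F}_q))\le \sharp(C(\mathbb{F}_q))+\Delta_Y$, which holds because $Z_Y(t)/Z_C(t)$ has degree $\Delta_Y$ and all its inverse roots have absolute value $1$, so by \eqref{eqb1} the quantity $-\sum_j\beta_j^1$ is at most $\Delta_Y$; and second, $\sharp(C(\mathbb{F}_q))+\Delta_Y\le \sharp(C(\mathbb{F}_q))+\delta=\sharp(C_{[q,2]}(\mathbb{F}_q))$, where $2\delta:=\sharp(C(\mathbb{F}_{q^2}))-\sharp(C(\mathbb{F}_q))$ and the hypothesis $2\Delta_Y\le \sharp(C(\mathbb{F}_{q^2}))-\sharp(C(\mathbb{F}_q))$ is exactly $\Delta_Y\le\delta$. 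Combining, $\sharp(Y(\mathbb{F}_q))\le\sharp(C(\mathbb{F}_q))+\delta=\sharp(C_{[q,2]}(\mathbb{F}_q))$ by Remark \ref{b2.0} (equation \eqref{eqb2.0} with $n=2$, which reads $\sharp(C_{[q,2]}(\mathbb{F}_q))=N_1+(N_2-N_1)/2=N_1+\delta$).

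Next I would analyze the equality case. Equality in the chain forces equality in \emph{both} steps. Equality in the first step forces every inverse root $\beta_j$ of $Z_Y(t)/Z_C(t)$ to equal $-1$; by Lemma \ref{e0}, since $Y$ is geometrically integral and $u\colon C\to Y$ is its normalization, this means that for each $P\in\mathrm{Sing}(Y)$ either $\sharp(u^{-1}(P))=1$ or $P\in Y(\mathbb{F}_q)$ and $u^{-1}(P)$ consists of two points of $C(\mathbb{F}_{q^2})\setminus C(\mathbb{F}_q)$ swapped by Frobenius. But $Y$ has only seminormal singularities by hypothesis, so no branch of $Y$ at a singular point is unibranch; hence the possibility $\sharp(u^{-1}(P))=1$ is excluded, and every singular point $P$ of $Y$ is an ordinary node defined over $\mathbb{F}_q$ whose two branches are conjugate points of $C(\mathbb{F}_{q^2})\setminus C(\mathbb{F}_q)$. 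In particular $\mathrm{Sing}(Y)(\overline{\mathbb{F}}_q)=\mathrm{Sing}(Y)(\mathbb{F}_q)$, each node contributes $1$ to $\Delta_Y$ and $1$ to $p_a(Y)-g$, so $\sharp(\mathrm{Sing}(Y)(\mathbb{F}_q))=\Delta_Y$.

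Equality in the second step forces $\Delta_Y=\delta$. Therefore $Y$ is a seminormal curve with normalization $C$, with exactly $\delta$ singular points, all of them $\mathbb{F}_q$-rational ordinary nodes, and for each such node the preimage under $u$ is a Frobenius-conjugate pair in $C(\mathbb{F}_{q^2})\setminus C(\mathbb{F}_q)$. The number of such conjugate pairs available is exactly $\delta=(N_2-N_1)/2$, so the $\delta$ nodes of $Y$ account for \emph{all} points of $C(\mathbb{F}_{q^2})\setminus C(\mathbb{F}_q)$, glued precisely in Frobenius orbits of size $2$ and in no other way. This is exactly the gluing data defining $C_{[q,2]}$; since a seminormal curve is determined by its normalization together with the partition of $C(\overline{\mathbb{F}}_q)$ recording which points are glued (see Remark \ref{t1} and the discussion of axial singularities in the Introduction), we conclude $Y\cong C_{[q,2]}$ over $\mathbb{F}_q$. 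Conversely, if $Y\cong C_{[q,2]}$ then $\sharp(Y(\mathbb{F}_q))=\sharp(C_{[q,2]}(\mathbb{F}_q))$ trivially, giving the ``if'' direction and completing the proof.

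The main obstacle I anticipate is the final reconstruction step: one must be careful that "exactly $\delta$ nodes, each a conjugate pair in $C(\mathbb{F}_{q^2})\setminus C(\mathbb{F}_q)$'' genuinely pins down $Y$ up to $\mathbb{F}_q$-isomorphism rather than merely up to abstract isomorphism of the underlying gluing combinatorics. The point is that any seminormal curve over $\mathbb{F}_q$ with normalization $C$ and a prescribed $\mathrm{Gal}(\overline{\mathbb{F}}_q/\mathbb{F}_q)$-stable partition of a finite subset of $C(\overline{\mathbb{F}}_q)$ into the parts being glued is unique (Traverso's gluing construction, cited in the Introduction), so once we know the partition is "group each conjugate pair of $C(\mathbb{F}_{q^2})\setminus C(\mathbb{F}_q)$ into a part of size two,'' which is forced, the curve is $C_{[q,2]}$ on the nose. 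Everything else is a direct transcription of Lemma \ref{e0} and the numerology of \eqref{eqb1} and \eqref{eqb2.0}.
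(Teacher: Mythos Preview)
Your proof is correct and follows essentially the same approach as the paper's own proof: bound $\sharp(Y(\mathbb{F}_q))$ by $\sharp(C(\mathbb{F}_q))+\Delta_Y$ via \eqref{eqb1}, use the hypothesis $\Delta_Y\le\delta$ to reach $\sharp(C_{[q,2]}(\mathbb{F}_q))$, and then in the equality case apply Lemma~\ref{e0} together with seminormality (which forces $\sharp(u^{-1}(P))\ge2$ at each singular point) to conclude that the fibers of $u$ coincide with those of $C\to C_{[q,2]}$, whence $Y\cong C_{[q,2]}$. The paper is terser at the reconstruction step, simply stating that the fibers match and invoking seminormality, whereas you spell out the counting argument showing that $\Delta_Y=\delta$ forces \emph{all} conjugate pairs in $C(\mathbb{F}_{q^2})\setminus C(\mathbb{F}_q)$ to be used; this added detail is welcome but does not constitute a different method.
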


\begin{proof}
We have $\sharp (Y(\mathbb {F}_q)) \le \sharp (C(\mathbb {F}_q)) +\Delta _Y$ and equality holds if and only if each inverse root of $Z_Y(t)/Z_C(t)$ is equal to $-1$.
Hence we may assume $2 \Delta _Y =  \sharp (C(\mathbb {F}_{q^2}))
-\sharp (C(\mathbb {F}_q))$. Since $Y$ has only seminormal singularities, $u$ is unramified. Since $u$ is unramified, we have $\sharp (u^{-1}(P)) \ge 2$ for all $P\in \mbox{Sing}(Y)$. Hence Lemma \ref{e0} gives
that the fibers of $u$ are the fibers of the normalization map $C \to C_{[q,2]}$. Since $Y$ and $C_{[q,2]}$ are seminormal, we get that they are isomorphic. They are isomorphic over $\mathbb {F}_q$, because
$u$ is defined over $\mathbb {F}_q$ and the seminormalization is defined over $\mathbb {F}_q$.
\end{proof}

\begin{remark}\label{a3}
In the case $C\cong \mathbb {P}^1$ Propositions \ref{a1} and \ref{a2} are partial answers to a questions raised in \cite{fhk}, Remark 2.5. Examples \ref{e1}, \ref{e2} and Lemma \ref{e3} show that we need to add some conditions on the curve
$Y$, not only to fix the normalization $\mathbb {P}^1$ and assume $\Delta _Y \le (q^2-q)/2$.
\end{remark}

\begin{example}\label{e1}
Fix a geometrically integral projective curve  $A$ defined over $\mathbb {F}_q$ and $P\in A(\mathbb {F}_q)$. Now we define a geometrically integral curve $Y$ defined over $\mathbb {F}_q$
and a morphism $v: A \to Y$ defined over $\mathbb {F}_q$, such that $v\setminus A\setminus \{P\}$ is an isomorphism onto $Y\setminus v(P)$, but $v$ is not a isomorphism. Notice
that for each such pair $(Y,v)$ we would have $p_a(Y)>p_a(A)$ and that for every integer $t\ge 1$ $v$ induces a bijection $A(\mathbb {F}_{q^t}) \to Y(\mathbb {F}_{q^t})$.
To define $Y$ and $v$ it is sufficient to define them in a neighborhood of $P$ in $A$ and the glue to it the identity map $A\setminus \{P\} \to A\setminus \{P\}$. Fix an embedding
$j: A\hookrightarrow \mathbb {P}^r$, $r\ge 3$, and take a projection of $j(A)$ into $\mathbb {P}^2$ from an $(r-3)$-dimensional linear subspace not containing $j(P)$, but
intersecting the Zariski tangent space of $j(A)$ at $j(P)$.
\end{example}

\begin{example}\label{e2}
Fix a geometrically integral projective curve $A$ defined over $\mathbb {F}_q$ and any point $P\in A_{reg}(\overline{\mathbb {F}}_q)$. Let $t$ be the minimal integer $t\ge 1$
such that $P\in A(\mathbb {F}_{q^t})$, i.e. let $t$ be the cardinality of the orbit of $P$ by the action of the Frobenius. We assume $t\ge 2$, because the case $t=1$ is covered by Example \ref{e1}. Hence the orbit of $P$ by the action of the Frobenius $F_q$ has order $t$ (it is $\{P,F_q(P),\dots ,F_q^{t-1}(P)\}$). Let $Y$ denote the only curve and $v: A \to Y$ the only morphism obtained in the following way.
We fix a bijection of sets $v: A \to Y$ and use it to define a topology on the set $Y$. Now we define $Y$ as a ringed space. On $Y\setminus u(\{P,F_q(P),\dots ,F_q^{t-1}(P)\})$ we assume that $v$ is an isomorphism
of local ringed spaces. For each $Q\in \{P,F_q(P),\dots ,F_q^{t-1}(P)\}$ we impose that $\mathcal {O}_{Y,Q}$ is the local ring of a unibranch singular point and that $v$ is the normalization map
(it may be done using the method of Example \ref{e1} with $Q$ instead of $P$ and $\mathbb {F}_{q^t}$ instead of $\mathbb {F}_q$).
We need to do the construction simultaneously over all $Q\in \{P,F_q(P),\dots ,F_q^{t-1}(P)\}$ and in such a way that the morphism is defined over
$\mathbb {F}_q$. As in Example \ref{e1} it is sufficient to define $v\vert U$, where $U$ is a neighborhood of $\{P,F_q(P),\dots ,F_q^{t-1}(P)\}$. There is an embedding
$j: A \to \mathbb {P}^r$, $r\ge t+2$, such that the $t$ lines $T_Q(j(A)$, $Q\in \{P,F_q(P),\dots ,F_q^{t-1}(P)\}$, are linearly independent. Since $j$ is defined over $\mathbb {F}_q$ the Frobenius $F_q$ of $\mathbb {P}^r$
acts  on $j(A)$ and on the tangent developable of $A$. Since $j(P)$ is defined over $\mathbb {F}_{q^t}$. Hence $T_{j(P)}j(A)(\mathbb {F}_{q^t})\setminus j(P)$ has $(q^{t+1}-1)/(q-1) -1$ elements. Fix any
$O\in T_{j(P)}j(A)(\mathbb {F}_{q^t})\setminus j(P)$. For each $x\in \{1,\dots ,t-1\}$, $F_q^x(O) \in T_{j(F_q^x(P))}j(A)(\mathbb {F}_{q^t})\setminus j(F_q^x(P))$. Since the $t$ tangent lines are linearly independent,
the linear space $E:= \langle \{O,F_q(O),\dots ,F_q^{t-1}(O)\}\rangle$ has dimension $t-1$. Since $E$ is $F_q$-invariant, it is defined over $\mathbb {F}_q$. Let $\pi : \mathbb {P}^r\setminus E\to \mathbb {P}^{r-t}$ denote the linear
projection from $E$. Since $E$ is defined over $\mathbb {F}_q$, $\pi$ is defined over $\mathbb {F}_q$. Hence the integral projective curve $T:= \overline{\pi (j(A)\setminus E\cap j(A))}\subset \mathbb {P}^{r-t}$ is defined
over $\mathbb {F}_q$. Since the $t$ tangent lines are linearly independent
and $O\ne j(P)$, we have $E\cap \{j(P),\dots ,j(F_q^{t-1}(P))\} = \emptyset$. Hence $E \cap j(U)=\emptyset$ for a sufficiently small neighborhood $U$ of $\{P,F_q(P),\dots ,F_q^{t-1}(P)\}$. Assume for the moment
that $\pi \vert j(A)\setminus j(A)\cap E$ is birational onto its image. Since $\pi \vert j(A)\setminus j(A)\cap E$ is birational onto its image, it is separable. Hence only finitely many points of $j(A_{reg})$ have a tangent line intersecting $E$. Restricting if necessary
$U \subseteq A_{reg}$ we may assume that for no other point $Q\in j(U)(\overline{\mathbb {F}}_q)$ the Zariski tangent space $T_{j(Q)}(j(A))$ intersects $E$. Since  $\pi \vert j(A)\setminus j(A)\cap E$ is birational onto its image, it is generically
injective. Hence restricting $U\subseteq A_{reg}$ we may assume that $\pi \vert j(U)$ is injective and an isomorphism outside $\{j(P),j(F_q(P)),\dots ,j(F_q^{t-1}(P))\}$. At these points the curve $T$ has a cusp, but perhaps not an ordinary cusp,
i.e. it is a unibranch singular point. Hence to conclude the example it is sufficient to find $j$ such that $\pi \vert j(A)\setminus j(A)\cap E$ is birational onto its image.
We take as $j$ is a linearly normal embedding of degree $d >  \max \{2p_a(A)-2,p_a(A)+t\}$. Since $d >  \max \{2p_a(A)-2,p_a(A)+t\}$, Riemann-Roch gives $r = d-p_a(A)$. Assume that $\pi \vert j(A)\setminus j(A)\cap E$ is not birational onto its image and call $x\ge 2$ its degree.
Thus $\deg (T) \le d/x \le d/2$. Since $j(A)$ spans $\mathbb {P}^r$, $T$ spans $\mathbb {P}^{r-t}$. Hence $\deg (T) \ge r-t = d-p_a(A)-t$. Hence $(d-p_a(A)-t)\ge 2(d-p_a(A) -t$, contradicting our assumption $d> p_a(A)+t$.
\end{example}

\begin{lemma}\label{e3}
Fix an integer $y>0$. Let $A$ be a geometrically integral projective curve defined over $\mathbb {F}_q$. Assume $A_{reg}(\mathbb {F}_q)\ne \emptyset$ and fix $P\in A_{reg}(\mathbb {F}_q)$.
Then there are a geometrically integral projective curve $Y$ and a morphism $u: A \to Y$ defined over $\mathbb {F}_q$ such that $u$ induces an isomorphism of $A\setminus \{P\}$
onto $Y\setminus u(P)$ and $p_a(Y)=p_a(A)+y$.
\end{lemma}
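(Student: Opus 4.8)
The plan is to mimic the local construction of Example \ref{e1}, but now tuning the projection so that the unibranch singularity created at $P$ has a prescribed drop of genus $y$ rather than the value $1$ obtained from a generic projection to $\mathbb{P}^2$. First I would fix a linearly normal embedding $j\colon A\hookrightarrow\mathbb{P}^r$ of large degree $d$, so that by Riemann--Roch $r=d-p_a(A)$ and $d$ may be taken as large as we wish; in particular we take $d>2p_a(A)-2$ and $d$ large compared to $y$. Over $\mathbb{F}_q$ the point $j(P)$ is rational, and inside the Zariski tangent line $T_{j(P)}(j(A))\cong\mathbb{P}^1$ we have $q$ points of $T_{j(P)}(j(A))(\mathbb{F}_q)\setminus\{j(P)\}$. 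The idea is to choose a linear subspace $E\subset\mathbb{P}^r$, defined over $\mathbb{F}_q$, that meets $T_{j(P)}(j(A))$ in a single point distinct from $j(P)$ and meets the higher osculating spaces of $j(A)$ at $j(P)$ to exactly the order needed to force the conductor at the image point to have colength $y$; then project away from $E$.

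The key steps, in order, are: (1) compute, for a point $Q$ on a smooth curve germ, the drop of genus $p_a(Y,u(P))$ of the unibranch singularity produced by a projection $\pi_E$ in terms of the orders of contact of $E$ with the osculating flag of $j(A)$ at $j(P)$ — concretely, if $\pi_E$ kills the tangent direction and the local expansion of $\pi_E\circ j$ at $P$ has value semigroup generated in a way that misses exactly $y$ gaps, then $p_a(Y,u(P))=y$; (2) exhibit such an $E$, defined over $\mathbb{F}_q$: since $j(P)$ and its whole osculating flag are defined over $\mathbb{F}_q$, one may choose $E$ spanned by $\mathbb{F}_q$-rational points of these osculating spaces, so $E$ is $F_q$-invariant, hence defined over $\mathbb{F}_q$, and by the degree bound $d>p_a(A)+\dim E$ the same spanning/degree argument as in Example \ref{e2} shows $\pi_E|j(A)$ is birational onto its image; (3) as in Examples \ref{e1} and \ref{e2}, shrink to a neighborhood $U$ of $P$ so that $\pi_E$ is an isomorphism on $j(U)\setminus\{j(P)\}$ and no other tangent line meets $E$, then glue the identity on $A\setminus\{P\}$ to this local model to obtain a global $Y$ and $u\colon A\to Y$ over $\mathbb{F}_q$ with $u$ an isomorphism off $P$ and $p_a(Y)=p_a(A)+y$.

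I expect the main obstacle to be step (1): relating the combinatorial data of "how $E$ meets the osculating flag at $j(P)$" to the exact value of the local $\delta$-invariant, and then showing every positive integer $y$ is realizable. The cleanest route is probably to work with an explicit local parametrization $t\mapsto(t,t^2,t^3,\dots)$ of $j(A)$ at $j(P)$ in suitable coordinates and choose $E$ so that the composition with $\pi_E$ becomes $t\mapsto(t^{a_1},t^{a_2},\dots)$ for a numerical semigroup whose complement has size $y$ — for instance the semigroup $\langle y+1,y+2,\dots,2y+1\rangle$, which has exactly $y$ gaps and can be arranged provided $r$ (hence $d$) is at least of order $2y$. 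This also shows that when $y\ge1$ the construction genuinely produces a unibranch singular point; the gluing and field-of-definition issues in steps (2)--(3) are then handled verbatim as in the preceding examples, using perfectness of $\mathbb{F}_q$ so that $u$ and the ambient projection descend to $\mathbb{F}_q$.
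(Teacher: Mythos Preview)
Your approach is correct in outline but takes a genuinely different route from the paper. The paper's proof is purely algebraic and much shorter: it simply replaces the local ring $\mathcal{O}_{A,P}$ by the subring $\mathbb{F}_q\cdot 1 + \mathbf{m}^{y+1}$, where $\mathbf{m}$ is the maximal ideal. Since $P$ is a smooth $\mathbb{F}_q$-point, $\mathcal{O}_{A,P}$ is a DVR with residue field $\mathbb{F}_q$, so $\mathbf{m}/\mathbf{m}^{y+1}$ has $\mathbb{F}_q$-dimension exactly $y$; gluing this modified local ring back into $A$ on the same topological space gives $Y$ with $\dim_{\mathbb{F}_q}(u_*\mathcal{O}_A/\mathcal{O}_Y)=y$, hence $p_a(Y)=p_a(A)+y$. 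No embeddings, projections, or osculating flags are needed.

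Your projection argument, by contrast, stays in the geometric style of Examples~\ref{e1} and~\ref{e2} and would also succeed, but it carries more overhead: you must check that for $d$ large the order sequence of the complete linear system at $P$ contains $0,1,\dots,2y+1$ (this follows from Riemann--Roch once $d-k\ge 2g-1$ for $k\le 2y+1$, so the concern about characteristic $p$ inflection is harmless here), then verify that the projection from $E$ really yields the intended numerical semigroup, and finally run the birationality and shrinking arguments. What your route buys is an explicit projective model of $Y$; what the paper's route buys is a two-line proof that sidesteps all of this by working directly with the structure sheaf.
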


\begin{proof}
Let $\bf {m}$ be the maximal ideal of the local ring $\mathcal {O}_{A,P}$. By assumption  $\mathcal {O}_{A,P}/{\bf {m}} \cong \mathbb {F}_q$ and
$\mathbb {F}_q\cdot 1\subset \mathcal {O}_{A,P}$. Hence the $\mathbb {F}_q$-vector space $\mathcal {O}_{A,P}$ is the direct sum of its subspaces
$\mathbb {F}_q\cdot 1$ and $\bf {m}$. Set $\mathcal {O}_{Y,u(P)}:= \mathbb {F}_q\cdot 1 +\bf {m}^{y+1} \subset \mathcal {O}_{A,P}$. It is easy to check that $\mathcal {O}_{Y,u(P)}$ is a local ring with
${\bf {m}}^{y+1}$ as its maximal ideal. Since $P\in A_{reg}$, $\mathcal {O}_{A,P}$ is a DVR. Hence $\bf {m}/\bf {m}^{t+1}$ is a $\mathbb {F}_q$-vector space of dimension $y$. We take as $Y$ the same topological space
as $Y$, but with $\mathcal {O}_{Y,u(P)}$ at the point $u(P)$ associated to $P$ instead of $\mathcal {O}_{A,P}$. With this definition of $u$ we have $\dim _{\mathbb {F}_q}(u_\ast (\mathcal {O}_A)/\mathcal {O}_Y)=y$.
Hence $p_a(Y)=p_a(A)+y$.
\end{proof}

\begin{remark}\label{t1}
Fix $q$, $C$ and an integer $n \ge 2$. Here we explain one way to check the existence of the curve $C_{[q,n]}$. We obtain $C_{[q,n]}$ in finitely many steps each of them similar to the one described
in Example \ref{e2}. We use $z$ steps, where $z$ is the number of orbits of $F_q$ in $C(\mathbb {F}_{q^n})\setminus C(\mathbb {F}_q)$. At each of the steps we glue together one of these orbits.
We do not need any notion of gluing, except that set-theoretically in each step one of these orbits is sent to a single point and for all other points the map is an isomorphism.
Fix $Q\in C(\mathbb {F}_{q^n})\setminus C(\mathbb {F}_q)$ and assume $ord (Q,q) =t|n$. Hence $\{Q,F_q(Q),\dots ,F_q^{t-1}(Q)\}$
is the orbit of $Q$ for the action of $F_q$. Call $A$ the geometrically integral curve arising in the steps at which we want to glue this orbit. Hence there is a geometrically integral projective $A$
curve defined over $\mathbb {F}_q$ with $C$ as its normalization (call $u: C \to A$) and $u(Q)\in A_{reg}$ (in the previous steps (if any) the maps where isomorphism at
each point of $\{Q,F_q(Q),\dots ,F_q^{t-1}(Q)\}$). Set $P:= u(Q)$. Since $u$ is defined over $\mathbb {F}_q$ and $u$ is an isomorphism in a neighborhood
of $u^{-1}(\{P,F_q(P),\dots ,F_q^{t-1}(P)\})$, we have $\{P,F_q(P),\dots ,F_q^{t-1}(P)\} \subset A_{reg}$ and these $t$ points are distinct. Hence $P\in A_{reg}(\mathbb {F}_{q^t}$
and $P\in  A_{reg}(\mathbb {F}_{q^y}$ if and only if $t|y$.
As in Example \ref{e2} we get several curves $Y$ and morphism $v: A\to Y$ defined over $\mathbb {F}_q$, sending $\{P,F_q(P),\dots ,F_q^{t-1}(P)\}$ to a single point, $O$,
of $Y$ and induces an isomorphism of $A\setminus \{P,F_q(P),\dots ,F_q^{t-1}(P)\}$ onto $Y\setminus \{O\}$. Let $A_1$ be the seminormalization of $Y$ in $A$. Then we use $A_1$ instead of $A$.
After $z$ steps we get $C_{[q,n]}$. To get $C_{[q,n]}$ we use the existence of the seminormalization. The result does not depend from the order of the gluing. Hence $C_{[q,n]}$ depends
only from $q$, $C$ and $n$. Hence the curves $\mathbb {P}^1_{[q,n]}$ depends only from $q$ and $n$.
\end{remark}

\providecommand{\bysame}{\leavevmode\hbox to3em{\hrulefill}\thinspace}

\end{document}